\newtheorem{thm}{\sc Theorem}[section]
\newtheorem{lem}[thm]{\sc Lemma}
\newtheorem{prop}[thm]{\sc Proposition}
\theoremstyle{definition}
\theoremstyle{remark}
\newtheorem{exam}[thm]{\sc Example}
\newtheorem{rmk}[thm]{\sc Remark}
\newtheorem{prob}[thm]{\sc Problem}
\newcommand{\R}{\mathbf{R}}
\newcommand{\Z}{\mathbf{Z}}
\newcommand{\Int}{\mathop{\mathrm{Int}}\nolimits}
\newcommand{\Image}{\mathop{\mathrm{Im}}\nolimits}
\renewcommand{\setminus}{\smallsetminus}
\def\spmapright#1{\smash{%
 \mathop{\hbox to 1cm{\rightarrowfill}}
  \limits^{#1}}}
\title{Linking between singular locus and regular fibers}
\dedicatory{Dedicated to Professor Goo
Ishikawa on the occasion of his 60th birthday}
\author{Osamu Saeki} 
\address{Institute of Mathematics for Industry,
Kyushu University,
Motooka 744, Nishi-ku, Fukuoka 819-0395, Japan}
\email{saeki@imi.kyushu-u.ac.jp}
\date{\today}
\keywords{Excellent map, $3$--manifold, singular point set, regular fiber, relative
Stiefel--Whitney class, framing}
\subjclass[2000]{Primary
57R45; 
Secondary
58K30, 
57M25, 
57R20, 
57R70. 
}
\begin{document}
\begin{abstract}
Given a null-cobordant oriented framed link $L$ in a closed 
oriented $3$--manifold $M$, we determine those links
in $M \setminus L$ which can be realized as the singular point
set of a generic map $M \to \R^2$ that has $L$
as an oriented framed regular fiber. Then, we study
the linking behavior between the singular point
set and regular fibers for generic maps of $M$ into $\R^2$.
\end{abstract}

\maketitle 

\section{Introduction}\label{section1}

Topology of generic $C^\infty$ maps of manifolds of dimension $\geq  2$
into the plane, $\R^2$, has been extensively studied as a natural generalization
of the Morse theory, which studies generic maps into the line, $\R$.
For a Morse function, singular points, or critical points, are isolated
and their positions in the source manifold are not interesting except for
their cardinalities or indices.
On the other hand, for a generic map into the plane, the singular point
set is a smooth submanifold of dimension one in the source
manifold and its position may be non-trivial.
In \cite{Saeki95}, the author studied the position of the
singular point set and characterized those smooth $1$--dimensional
submanifolds which arise as the singular point set of a generic map.

On the other hand, each regular fiber of such a generic map
into $\R^2$ is of codimension two and
is disjoint from the singular point set.
Therefore, the singular point set and regular fibers may be
non-trivially linked. 

In September 2018\footnote{In the conference 
``Geometric and Algebraic Singularity Theory'' held
in honor of the 60th birthday of Goo Ishikawa, 
in B\c{e}dlewo, Poland.} Professor David Chillingworth
asked the author the following question: 
\textit{for a generic map $f : \R^3 \to \R^2$,
must every component of a regular fiber
be linked by at least one component of 
the singular point set ?}

In this paper, we concentrate ourselves to generic maps of
\emph{closed} (i.e.\ compact and boundaryless) 
$3$--dimensional manifolds, instead of $\R^3$,
and study the linking behavior between the singular point set
and regular fibers in the source $3$--manifold.
More precisely, let $M$ be a closed oriented $3$--manifold and
$f : M \to \R^2$ a generic $C^\infty$ map. Generic maps
that we consider in this paper are called \emph{excellent maps},
as defined in \S\ref{section2}, and have fold and cusp singularities.
In our $3$--dimensional case, both the singular point set
and regular fibers have dimension one, and they constitute
disjoint links in $M$. We study their relative positions
in the $3$--manifold $M$.

For example, let us consider the unit sphere
$S^3 \subset \R^4$ and let
$\pi : \R^4 \to \R^2$ be the standard
projection defined by $\pi(x_1, x_2, x_3, x_4) = (x_1, x_2)$
for $(x_1, x_2, x_3, x_4) \in \R^4$.
Then, $f_0 = \pi|_{S^3} : S^3 \to \R^2$ is an excellent map
whose singular point set 
$S(f_0) = \{(x_1, x_2, 0, 0) \in S^3\}$
consists only of definite fold singularities and
is a trivial knot in $S^3$.
Furthermore, for $y = (y_1, y_2)$ with $y_1^2 + y_2^2 < 1$, the 
regular fiber $f_0^{-1}(y)
= \{(y_1, y_2, x_3, x_4) \in S^3\}$ is an unknotted
circle linked with $S(f_0)$ (see Fig.~\ref{fig1}).
So, in this example, the answer to the above
question is positive.

\begin{figure}[t]
\centering
\psfrag{f}{$f_0^{-1}(y)$}
\psfrag{s}{$S(f_0)$}
\includegraphics[width=0.9\linewidth,height=0.2\textheight,
keepaspectratio]{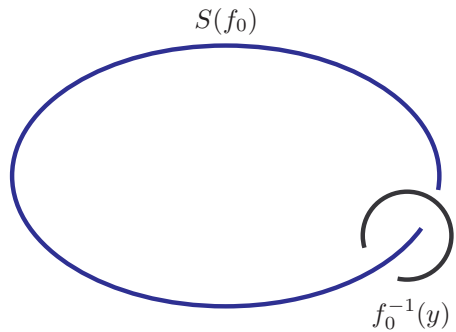}
\label{fig1}
\caption{Singular point set and a regular fiber for a
specific map $f_0 : S^3 \to \R^2$}
\end{figure}

The present paper is organized as follows.
In \S\ref{section2}, we will first see that regular fibers
are naturally oriented and framed; i.e.\ they have natural normal 
framings induced by the generic map $f : M \to \R^2$.
Furthermore, they bound compact oriented normally framed
surfaces embedded in $M$. Conversely, in \cite{Saeki94},
it has been shown that if an oriented normally framed
link in $M$ bounds a compact oriented normally framed surface, then
it is realized as a regular fiber of a generic map of $M$ into $\R^2$.
Then, in Theorem~\ref{thm:main}, given such a framed link $L$ in $M$,
we characterize those unoriented links in $M \setminus L$
that arise as the singular point set of a generic map $f : M \to \R^2$
such that $L$ coincides with a framed regular fiber of $f$.
The characterization is given in terms of a relative characteristic
class (see \cite{Ker}) which is the obstruction to
extending a certain trivialization of the tangent bundle of $M$
on a neighborhood of $L$ to the whole $M$.

In \S\ref{section3}, we will study the relative characteristic
class which arises as the obstruction as above.
As a consequence, we will show that if a regular
fiber has an odd number of components, then it
necessarily links with the singular point set (see
Remark~\ref{rmk:odd}).
We will also give a result which enables us to identify the obstruction
for local links that are embedded inside an open $3$--ball.

In \S\ref{section35},
by utilizing the results obtained in \S\ref{section3}, 
we show that there exist generic maps
$S^3 \to \R^2$ such that a regular fiber, which is a $2$--component
link, and the singular point
set are split; i.e.\ they lie inside disjoint $3$--balls.
We also see that there exists such an example for
every closed oriented $3$--manifold $M$.
We also give two explicit examples of generic maps
$S^3 \to \R^2$ which exhibit
non-linking phenomena between regular fibers and the singular
point set.

Finally in \S\ref{section4},
we address the original question concerning
generic maps of $\R^3$ into the plane.
By utilizing results obtained in \cite{HP} on regular fibers of
submersions $\R^3 \to \R^2$, we answer to
the question negatively, by constructing counter examples.

Throughout the paper, manifolds and maps
are differentiable of class $C^\infty$
unless otherwise indicated.
All (co)homology
groups are with $\Z_2$--coefficients unless otherwise
indicated. The symbol ``$\cong$'' means
an appropriate isomorphism between algebraic objects. 

\section{Main theorem}\label{section2}

Let $M$ be a closed oriented $3$--dimensional manifold.
We say that a map $f : M \to \R^2$ is \emph{excellent}
if its singularities consist only of
fold and cusp singularities, where
a \emph{fold singularity} (or a 
\emph{cusp singularity}) is modeled on
the map germ $(x, y, z) 
\mapsto (x, y^2 \pm z^2)$ (resp.\ $(x, y, z) 
\mapsto (x, y^3 + xy - z^2)$) at the origin.
We say that a fold singularity is \emph{definite} 
(resp.\ \emph{indefinite}) if it is modeled on the map germ
$(x, y, z) \mapsto (x, y^2 + z^2)$ (resp.\ $(x, y, z) 
\mapsto (x, y^2 - z^2)$).

It is known that the set of excellent maps
is always open and dense in the mapping space
$C^\infty(M, \R^2)$ endowed with the Whitney $C^\infty$
topology (for example, see \cite{GG, Whitney55}).

In the following, for a map $f : M \to \R^2$,
we denote by $S(f)$ the set of singular points of $f$.
If $f$ is an excellent map, then we see easily that
$S(f)$ is a link in $M$, i.e.\ a disjoint union
of finitely many smoothly embedded circles.
For a regular value $y \in \R^2$, if $L = f^{-1}(y)$
is non-empty, then we call it a \emph{regular fiber},
which is also a link in $M$ and is disjoint from $S(f)$.
We fix an orientation of $\R^2$ once and for all, and then 
a regular fiber is naturally oriented, since $M$ is oriented.
Furthermore, $L$ is naturally \emph{framed}: its framing
is given as the pull-back of the trivial normal framing
of the point $y$ in $\R^2$ (see Fig.~\ref{fig2}). 
In other words, taking
a small disk neighborhood of $y$ in $\R^2$ consisting
entirely of regular values, let $y'$
be a point in its boundary, then $f^{-1}(y')$ represents
the framed longitude of the framed link $L$.

\begin{figure}[t]
\centering
\psfrag{f}{$f$}
\psfrag{m}{$M$}
\psfrag{r}{$\R^2$}
\psfrag{y}{$y$}
\psfrag{yy}{$y'$}
\psfrag{fy}{$L = f^{-1}(y)$}
\psfrag{fyy}{$f^{-1}(y')$}
\includegraphics[width=\linewidth,height=0.35\textheight,
keepaspectratio]{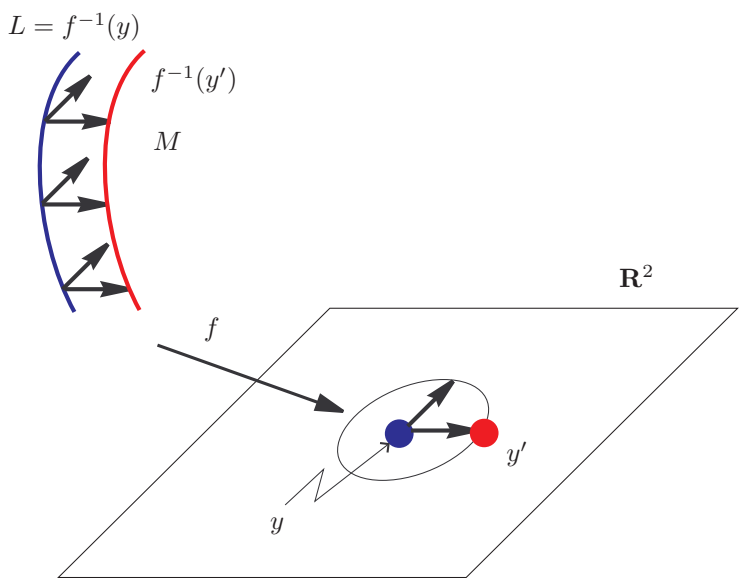}
\caption{Framing for a regular fiber}
\label{fig2}
\end{figure}

\begin{lem}
A framed regular fiber $L$ of an excellent map
$f : M \to \R^2$ over a regular point $y \in \R^2$
is always framed null-cobordant.
In other words, there exists a compact oriented
surface $V$ embedded in $M$ whose boundary coincides
with $L$ and which is consistent with the framed
longitude.
\end{lem}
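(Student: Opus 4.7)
The plan is to construct $V$ explicitly as the preimage of a carefully chosen arc in $\R^2$ running from $y$ past the image of $f$. Since $M$ is closed, $f(M) \subset \R^2$ is compact, so I can pick $y' \in \R^2 \setminus f(M)$, for which $f^{-1}(y') = \emptyset$. Using that the cusp values of $f$ and the self-intersections of $f(S(f))$ are both finite sets, and that the rest of $f(S(f))$ is a smoothly immersed $1$--manifold, a standard transversality argument yields an embedded smooth arc $\gamma \co [0,1] \to \R^2$ with $\gamma(0) = y$, $\gamma(1) = y'$ that avoids these finite sets and meets $f(S(f))$ transversally at finitely many interior fold values. The candidate surface will be $V := f^{-1}(\gamma)$.

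The technical heart will be to check that $V$ is a smooth compact $2$--submanifold of $M$ with $\partial V = L$. Away from $S(f)$ this follows immediately from the submersion theorem. At a fold point $p \in V$, I will use local coordinates in which $f(x,y,z) = (x, y^2 \pm z^2)$ and in which $\gamma$ coincides with the transverse segment $\{(0,t) : |t| \leq \varepsilon\}$; the local picture of $V$ is then $\{(0,y,z) : y^2 \pm z^2 \in [-\varepsilon, \varepsilon]\}$, which is an open region of the smooth affine plane $\{x=0\}$---hence a smooth $2$--submanifold of $M$. Near the initial endpoint $y$, the trivialization $f^{-1}(D) \cong L \times D$ over a disk $D$ of regular values identifies $V$ locally with the collar $L \times (\gamma \cap D)$, while the terminal endpoint contributes nothing since $f^{-1}(y') = \emptyset$. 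Thus $\partial V = L$.

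Finally, I will orient $\gamma$ from $y$ to $y'$; together with the orientations of $M$ and $\R^2$ this orients $V$ via the exact sequence $0 \to TV \to TM|_V \to f^*(T\R^2/T\gamma) \to 0$, and the induced orientation on $\partial V$ matches the natural orientation of $L$. For the framing, the longitude obtained by pushing $L = \partial V$ into $V$ corresponds under the collar identification to $L \times \{\gamma(\delta)\} = f^{-1}(\gamma(\delta))$ for small $\delta > 0$, which is isotopic in the boundary of the tubular neighborhood $f^{-1}(D)$ to $f^{-1}(y')$ for $y' \in \partial D$---the framed longitude of $L$. I expect the main obstacle to be the smoothness check at an indefinite fold crossing, where the local slice $\{|y^2 - z^2| \leq \varepsilon\}$ contains the singular cross $\{y = \pm z\}$ and might superficially look like a nodal surface; the point to emphasize is that we slice by the smooth plane $\{x=0\}$, not by a level set of $y^2-z^2$, so the resulting set is genuinely a smooth planar region.
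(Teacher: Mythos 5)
Your construction is essentially the same as the paper's proof, which simply takes $V = f^{-1}(\ell)$ for a half-line $\ell$ emanating from $y$ and transverse to $f$; your embedded arc from $y$ to a point outside the compact set $f(M)$ plays exactly the role of that half-line. The local smoothness check at fold points, the boundary identification, and the framing verification that you spell out are all correct and are left implicit in the paper's one-line argument.
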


\begin{proof}
Let $\ell$ be a half line in $\R^2$ emanating
from $y$. We may assume that it is transverse to
the map $f$. Then, $V = f^{-1}(\ell)$ gives
the desired surface (see Fig.~\ref{fig5}).
\end{proof}

\begin{figure}[t]
\centering
\psfrag{f}{$f$}
\psfrag{m}{$M$}
\psfrag{r}{$\R^2$}
\psfrag{y}{$y$}
\psfrag{fy}{$L = f^{-1}(y)$}
\psfrag{l}{$\ell$}
\psfrag{v}{$V = f^{-1}(\ell)$}
\includegraphics[width=\linewidth,height=0.4\textheight,
keepaspectratio]{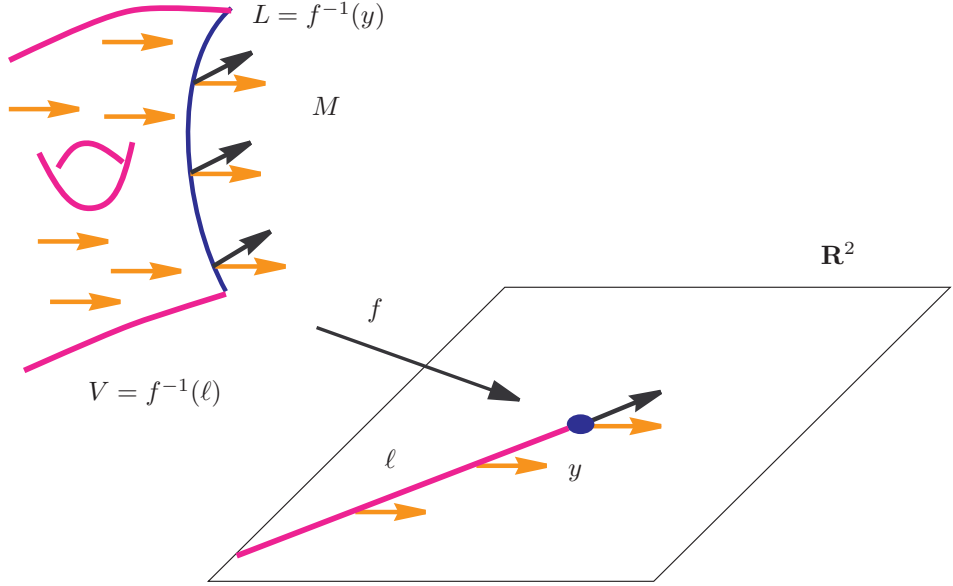}
\caption{Constructing a framed null-cobordism}
\label{fig5}
\end{figure}

In \cite[Proposition~5.1]{Saeki94}, it has been shown that
every null-cobordant oriented framed link $L$ in $M$ can
be realized as an oriented framed regular fiber
of an excellent map $f : M \to \R^2$.
In this case, the singular point set $S(f)$
is a link disjoint from $L$.
Then, it is natural to ask which links in $M \setminus L$
appear as the singular point set of such an excellent map.

In order to state our first theorem, let us
prepare some notations and terminologies.
For a (unoriented) link $J$ in $M \setminus L$, we denote
by $[J]_2 \in H_1(M \setminus L)$ the $\Z_2$--homology
class represented by $J$. Let $N(L)$ be a small tubular
neighborhood of $L$ in $M$ disjoint from $J$. 
Since $L$ is a framed link,
we have a natural $3$--framing of $M$ over $\partial N(L)$, i.e.\ a
trivialization of $TM|_{\partial N(L)}$.
The obstruction to extending this framing
over $M \setminus \Int{N(L)}$ is the relative
Stiefel--Whitney class (see \cite{Ker}), 
denoted by $w_2(M, L)$,
which is an element of the
$\Z_2$--cohomology group 
$$H^2(M \setminus \Int{N(L)},
\partial N(L)) \cong H^2(M, N(L)) \cong H^2(M, L),$$
where the first isomorphism is given by excision and
the second one is given by the natural homotopy
equivalence $(M, L) \to (M, N(L))$. Note that by
Poincar\'e--Lefschetz duality, we have 
$$H^2(M \setminus \Int{N(L)},
\partial N(L)) \cong H_1(M \setminus \Int{N(L)}) \cong
H_1(M \setminus L).$$

\begin{rmk}\label{rmk:w}
Let $j : (M, \emptyset) \to (M, L)$
be the inclusion. Then the induced homomorphism
$j^* : H^2(M, L) \to H^2(M)$ maps $w_2(M, L)$
to the second Stiefel--Whitney class $w_2(M)$
of $M$, which vanishes. By the cohomology exact sequence
$$H^1(L) \spmapright{\delta} H^2(M, L) \spmapright{j^*} H^2(M),$$
we see that $w_2(M, L) = \delta (\alpha)$ for some $\alpha
\in H^1(L)$.
\end{rmk}

Now, one of the main theorems of this paper is the following.

\begin{thm}\label{thm:main}
Let $L$ be an oriented null-cobordant framed link
in a closed oriented $3$--manifold $M$, and 
$J$ be an unoriented link in $M$ disjoint from $L$.
Then, there exist an excellent map $f : M \to \R^2$
and a regular value $y$ such that $f^{-1}(y)$
coincides with $L$ as an oriented framed link and that
$S(f) = J$ if and only if $[J]_2 \in H_1(M
\setminus L)$ is Poincar\'e dual to $w_2(M, L)
\in H^2(M, L)$.
\end{thm}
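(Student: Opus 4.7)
The theorem has two directions, necessity and sufficiency, which I would attack separately.

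\textbf{Necessity.} Given such an $f$, I would first produce a canonical $3$--framing of $TM$ on $M \setminus J$, where $J = S(f)$. Indeed, on the submersion locus the oriented line bundle $\ker df$ is trivial, and $df$ identifies $TM/\ker df$ with $f^*T\R^2$; pulling back the standard frame of $\R^2$ and combining with a trivialization of $\ker df$ yields the framing. Near $L$ this framing reduces to the canonical framing $F_L$ of \S\ref{section2} by the very definition of a framed regular fiber. Consequently the relative obstruction $w_2(M,L)$ restricts trivially to $(M \setminus \Int N(J), L)$, so by the long exact sequence of the triple $(M, M \setminus \Int N(J), L)$ together with excision, it is the image of a class in
$$H^2(N(J),\partial N(J)) \cong \Z_2^{\,c},$$
where $c$ is the number of components of $J$, each solid-torus component contributing one bit.

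To identify those bits I would inspect a small meridian circle around a fold component: using the normal form $(x,y,z) \mapsto (x, y^2 \pm z^2)$, the induced frame rotates by $2\pi$ in the transverse plane as one goes once around the meridian, representing the nontrivial element of $\pi_1(SO(3)) \cong \Z_2$. Cusps sit isolated on fold arcs and do not affect this meridional computation. Hence every component of $J$ contributes $1 \in \Z_2$, and Poincar\'e--Lefschetz duality then gives exactly that $w_2(M,L)$ is Poincar\'e dual to $[J]_2$.

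\textbf{Sufficiency.} For the converse, take a reference excellent map $f_0$ realizing $L$ as a framed regular fiber via \cite[Proposition~5.1]{Saeki94}. The necessity direction applied to $f_0$ yields $[S(f_0)]_2 = [J]_2$ in $H_1(M \setminus L)$, so $S(f_0)$ and $J$ are $\Z_2$--homologous in $M \setminus L$. The plan is then to deform $f_0$ through a smooth homotopy supported away from a tubular neighborhood of $L$, so that $L$ and its framing persist as a framed regular fiber throughout, ending in an excellent map whose singular set is exactly $J$.

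The deformation would be built from the standard codimension--one transitions in generic $1$--parameter families of excellent maps $M^3 \to \R^2$: \emph{lips} (birth of a small unknotted fold circle bounding a disk), \emph{beak-to-beak} (reconnection of two fold arcs), and \emph{swallowtail} (birth or death of cusp pairs), augmented by ambient isotopies. The main obstacle I expect is to show that these local moves suffice to carry any link to any other link in the same $\Z_2$--homology class in $M \setminus L$ while leaving $L$ undisturbed: concretely, one must realize an explicit $\Z_2$--bordism between $S(f_0)$ and $J$ in $M \setminus L$ as a sequence of lips and beak-to-beak operations, translating each elementary surgery of the link into a genuine deformation of the map supported away from $N(L)$. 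Once this realization is accomplished the resulting excellent map has $L$ as framed regular fiber and $J$ as singular set, completing the proof.
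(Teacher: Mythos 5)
Your proposal is correct and follows essentially the same route as the paper: necessity is the Thom-type duality between the singular set and the relative class $w_2(M,L)$ (which the paper simply cites to \cite{Thom55} and which you prove directly via the framing on the regular locus and the meridional computation at fold components), while sufficiency reduces, exactly as in the paper, to taking a reference map from \cite{Saeki94} and then modifying its singular set within its $\Z_2$--homology class in $M \setminus L$ relative to $N(L)$ using the lips/beak-to-beak/cusp-elimination machinery of \cite{Saeki95}. The step you flag as the ``main obstacle'' is precisely the point at which the paper, too, defers to \cite{Saeki95} rather than giving details.
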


\begin{proof}
Suppose that $f : M \to \R^2$ is an excellent map
such that $L$ coincides with $f^{-1}(y)$ as a
framed link for a regular value $y$ and that
$J = S(f)$. Then, by Thom \cite{Thom55},
$[J]_2$ is Poincar\'e dual to $w_2(M, L)$.

Conversely, suppose that $[J]_2 \in H_1(M
\setminus L)$ is Poincar\'e dual
to $w_2(M, L)$. Let $g : M \to \R^2$
be an arbitrary excellent map for which
there exists a regular value $y$ such that
$g^{-1}(y)$ coincides with $L$ as a framed link.
Such an excellent map always exists by \cite{Saeki94}.
Then, we see that $[S(g)]_2 \in H_1(M
\setminus L)$ is Poincar\'e dual
to $w_2(M, L)$. By our assumption, this implies
that $J$ and $S(g)$ are $\Z_2$--homologous
in $M \setminus L$.

Then, by the same argument as in \cite{Saeki95}
applied to $M \setminus L$, which uses band operations
and cusp elimination techniques, we see that $g$ is
homotopic relative to $N(L)$ to an excellent map $f$
with $S(f) = J$.
This completes the proof.
\end{proof}

\begin{rmk}
As in \cite{Saeki95}, suppose $J$ is decomposed as
a disjoint union
$$J = J_0 \cup J_1 \cup C,$$
where $J_0$ and $J_1$ are finite disjoint unions
of open arcs and circles, $C$ is a finite set of points,
and each point of $C$ is adjacent to both $J_0$ and $J_1$.
If both $J_0$ and $J_1$ are non-empty, then
in Theorem~\ref{thm:main}, we can find an excellent map $f$
such that $S(f) = J$, $J_0$ is the set of definite fold
singularities, $J_1$ is the set of indefinite fold singularities,
and $C$ is the set of cusp singularities.
\end{rmk}

\begin{rmk}
Let $g : M \to \R^2$ be an excellent map
for which there exists a regular value $y$ such that
$g^{-1}(y)$ coincides with $L$ as a framed link.
In the situation of the theorem, we see that 
$[J]_2 \in H_1(M)$ is Poincar\'e dual to $w_2(M)$,
which vanishes,
by Remark~\ref{rmk:w}. Then, we can apply the 
modification techniques developed in \cite{Saeki95}
without touching $L$
to obtain an excellent map $h : M \to \R^2$
homotopic to $g$ such that $S(h)$ is isotopic to
$J$ in $M$. However, in order to obtain an excellent map
$h'$ such that $S(h')$ coincides with $J$,
we need to further modify $h$. In such 
a modification process, the regular
fiber over $y$ may change,
since in the course of the isotopy, the link may
cross $L$. In \S\ref{section3}, we will see that 
not every $\Z_2$ null-homologous link $J$ in $M$ can
be realized as above, depending on its position
relative to $L$.
\end{rmk}

Generalizing our Theorem~\ref{thm:main}, we can
also obtain the following, which can be proved
by the same argument. Details are left to the reader.

\begin{thm}
Let $M$ be a closed oriented $3$--manifold and
$L_1, L_2, \ldots, L_\ell$, and $J$
be disjoint links in $M$.
Suppose that $L_1, L_2, \ldots, L_\ell$
are oriented and null-cobordant framed links, and 
that they bound disjoint compact oriented framed surfaces.
Furthermore, $J$ is an unoriented link. 
Then, there exist an excellent map
$f : M \to \R^2$ and distinct regular values $y_1, y_2, \ldots, y_\ell
\in \R^2$ of $f$ such that $f^{-1}(y_i) = L_i$ as framed
links for $i = 1, 2, \ldots, \ell$,
and $J = S(f)$ if and only if
$[J]_2 \in H_1(M \setminus L)$
is Poincar\'e dual to $w_2(M, L)$, where
$L = L_1 \cup L_2 \cup \cdots \cup L_\ell$.
\end{thm}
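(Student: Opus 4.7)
The plan is to mirror the two-step structure of the proof of Theorem~\ref{thm:main}, with the only new ingredient being the simultaneous realization of several framed fibers.

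For necessity, suppose $f \co M \to \R^2$ is an excellent map with distinct regular values $y_1, \ldots, y_\ell$ satisfying $f^{-1}(y_i) = L_i$ as framed links and $S(f) = J$. Then $J \subset M \setminus L$, and Thom's result \cite{Thom55} applied to $f$ shows that $[S(f)]_2$ is Poincar\'e dual to $w_2(M,L)$ in $H_1(M \setminus L)$. This half is essentially the same as in Theorem~\ref{thm:main}; the presence of several framed fibers adds nothing.

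For sufficiency, the strategy is first to construct some excellent map realizing all of the $L_i$ simultaneously as framed fibers, and then to perform a $\Z_2$--homology preserving modification inside $M \setminus L$. For the first step I would pick pairwise disjoint compact oriented framed surfaces $V_1, \ldots, V_\ell$ bounding the $L_i$ (which exist by hypothesis), choose pairwise disjoint properly embedded half-lines $\ell_1, \ldots, \ell_\ell \subset \R^2$ emanating from distinct points $y_i$, and then adapt the construction of \cite[Proposition~5.1]{Saeki94} to produce an excellent map $g \co M \to \R^2$ with $g^{-1}(y_i) = L_i$ as framed links and $g^{-1}(\ell_i) = V_i$. The disjointness of the $V_i$ is exactly what allows the single-fiber construction in \cite{Saeki94} to be carried out on pairwise disjoint tubular neighborhoods and then glued into a global map.

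Once such a $g$ is in hand, the rest is almost verbatim Theorem~\ref{thm:main}: $[S(g)]_2$ is Poincar\'e dual to $w_2(M,L)$ in $H_1(M \setminus L)$ by the necessity direction, so by the hypothesis $[S(g)]_2 = [J]_2$ in $H_1(M \setminus L)$. Then the band operations and cusp elimination techniques of \cite{Saeki95}, performed inside the open manifold $M \setminus L$ (equivalently, as a homotopy of $g$ relative to a small neighborhood $N(L) = N(L_1) \sqcup \cdots \sqcup N(L_\ell)$), deform $g$ to an excellent map $f$ with $S(f) = J$ while keeping each framed fiber $L_i$ untouched. The main obstacle, and the only place where real work beyond Theorem~\ref{thm:main} is needed, is the first step, namely ensuring that the construction from \cite{Saeki94} can be performed simultaneously for all $L_i$ without forcing unwanted singularities between them; the disjointness of the surfaces $V_i$, together with the freedom to choose disjoint half-lines $\ell_i$ in $\R^2$, is precisely what makes this possible, and handles the only genuinely multi-fiber aspect of the theorem.
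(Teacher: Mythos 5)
Your proposal is correct and follows exactly the route the paper intends: the paper gives no separate proof, stating only that the theorem ``can be proved by the same argument'' as Theorem~\ref{thm:main} with details left to the reader, and your two-step argument (simultaneous realization of the $L_i$ via disjoint framed Seifert surfaces and disjoint half-lines, followed by the rel-$N(L)$ modification of the singular set using the techniques of \cite{Saeki95}) is precisely that argument with the one genuinely new point --- the multi-fiber realization --- correctly identified and sketched.
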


For maps into $S^2$, we have a similar result as follows.
Recall that, for a closed oriented $3$--dimensional
manifold $M$, the homotopy classes of $M$ into $S^2$
are in one-to-one
correspondence with the framed cobordism classes
of closed oriented framed $1$--dimensional submanifolds
in $M$ by the Pontrjagin--Thom construction.
For the classification of the homotopy set $[M, S^2]$
for a closed oriented $3$--manifold $M$, the reader
is referred to \cite{CRS}.

\begin{thm}\label{thm3}
Let $M$ be a closed oriented $3$--manifold and
fix a homotopy class of a map $g : M \to S^2$.
Let $L$ be an oriented framed link in $M$ which
corresponds to the homotopy class of $g$.
Then, for an unoriented link $J$ in $M \setminus L$,
there exist an excellent map
$f : M \to S^2$ homotopic to $g$
and a regular value $y \in S^2$ of $f$ such that 
$f^{-1}(y)$ coincides with $L$ as a framed link
and $J = S(f)$ if and only if
$[J]_2 \in H_1(M \setminus L)$
is Poincar\'e dual to $w_2(M, L)$.
\end{thm}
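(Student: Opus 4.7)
The plan is to reduce to the proof of Theorem~\ref{thm:main}, the only genuinely new step being the construction of an initial excellent map $g_1 \colon M \to S^2$ homotopic to $g$ and having $L$ as a framed regular fiber over $y$; this replaces the invocation of \cite{Saeki94} used in the proof of Theorem~\ref{thm:main} and is supplied here by the Pontrjagin--Thom construction.

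Necessity is exactly as in Theorem~\ref{thm:main}. If $f \colon M \to S^2$ is excellent with $f^{-1}(y) = L$ as framed links and $S(f) = J$, then Thom's characteristic-class argument \cite{Thom55} shows that $[J]_2 \in H_1(M \setminus L)$ is Poincar\'e dual to $w_2(M, L)$; this computation is local around $y$ and involves only $TM$, so the shift from $\R^2$ to $S^2$ as target does not affect it.

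For sufficiency I first produce $g_1$. Let $D \subset S^2$ be a small closed disk neighborhood of $y$, and identify a tubular neighborhood $N(L) \cong L \times D$ using the framing, so that $\{l\} \times D$ matches the framed normal disk to $L$ at $l$. Define $g_0 \colon M \to S^2$ by $g_0(l, v) = v$ on $N(L) \cong L \times D$; since the target $S^2 \setminus \Int{D}$ of the boundary map $g_0|_{\partial N(L)}$ is a closed disk and hence contractible, a collar homotopy yields a continuous extension of $g_0$ over $M \setminus \Int{N(L)}$. By construction $g_0^{-1}(y) = L$ as framed links, so the Pontrjagin--Thom class of $g_0$ is that of $L$, which equals the class of $g$; hence $g_0$ is homotopic to $g$. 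A relative smooth approximation, keeping $g_0$ fixed on a slightly smaller tubular neighborhood of $L$ and perturbing it on the complement, then produces an excellent map $g_1$ homotopic to $g_0$ (and hence to $g$) with $L$ still the framed regular fiber over $y$.

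Given $g_1$, the argument finishes as in Theorem~\ref{thm:main}. By necessity applied to $g_1$, the class $[S(g_1)]_2$ is Poincar\'e dual to $w_2(M, L)$, so the hypothesis on $J$ gives $[J]_2 = [S(g_1)]_2$ in $H_1(M \setminus L)$. The band-operation and cusp-elimination techniques of \cite{Saeki95}, executed inside $M \setminus L$, then deform $g_1$ through a homotopy relative to $N(L)$ into an excellent map $f \colon M \to S^2$ with $S(f) = J$; such a relative homotopy preserves both the framed regular fiber over $y$ and the homotopy class, so $f$ is homotopic to $g$. The main technical delicacy is the relative approximation that yields $g_1$: the perturbation of $g_0$ must be carried out on $M \setminus \Int{N(L)}$ without disturbing its prescribed form on $N(L)$, which is standard jet-transversality in a relative setting but is the only substantive new ingredient beyond the proof of Theorem~\ref{thm:main}.
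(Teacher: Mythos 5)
Your proof is correct and follows exactly the route the paper intends: the paper explicitly leaves this proof to the reader as ``similar to that of Theorem~\ref{thm:main}'', and your argument is that proof, with the Pontrjagin--Thom construction supplying the initial excellent map in the prescribed homotopy class in place of the citation of \cite{Saeki94}. Both the necessity via Thom's duality and the modification of the singular set rel $N(L)$ via \cite{Saeki95} are handled as in Theorem~\ref{thm:main}, and you correctly observe that a homotopy relative to $N(L)$ preserves the homotopy class, so no further issues arise.
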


The proof of Theorem~\ref{thm3} is similar to that
of Theorem~\ref{thm:main} and is left to the reader.
Note that Theorem~\ref{thm:main} corresponds to
the case of a null-homotopic map $g$ in Theorem~\ref{thm3}
in a certain sense.

\section{Obstruction}\label{section3}

In order to apply Theorem~\ref{thm:main} in 
practical situations,
let us study the obstruction class $w_2(M, L)$
more in detail, where $M$ is a closed oriented
$3$--manifold and $L$ is a framed link in $M$.

As we saw in Remark~\ref{rmk:w}, there exists an
$\alpha \in H^1(L)$ such that $\delta(\alpha) 
= w_2(M, L)$. 
In fact, the cohomology class $\alpha$ can
be explicitly given as follows.
It is known that a closed oriented $3$--manifold $M$
is always parallelizable, i.e.\ its
tangent bundle is trivial. Let us fix a framing $\tau$
of $M$, where $\tau$ can be identified with a
trivialization of the tangent bundle $TM$.
Once such a framing $\tau$ is fixed, we can compare it with
the specific framing given on each component $L_s$ of the framed link $L$. 
This defines a well-defined element $a_s$ in $\pi_1(SO(3)) \cong \Z_2$.
Then, we have the following.

\begin{lem}
The Kronecker product $\langle \alpha, [L_s]_2 \rangle \in \Z_2$
coincides with $a_s$.
\end{lem}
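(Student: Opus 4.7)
The plan is to view $w_2(M, L)$ through obstruction theory, using the globally defined framing $\tau$ as a reference. Since $\tau$ trivializes $TM$, a framing of $TM$ over any subset $X \subseteq M$ is the same data as a map $X \to SO(3)$. In particular, the framing $\nu$ of $TM|_{N(L)}$ obtained by radially extending the $3$-framing on $L$ (i.e.\ the tangent direction together with the link framing) corresponds, via comparison with $\tau|_{N(L)}$, to a map $\phi : N(L) \to SO(3)$, and $w_2(M, L) \in H^2(M, L)$ is, by its very definition, the primary obstruction to extending $\phi$ to a map $M \to SO(3)$.

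Next I would observe that, because $\nu$ is pulled back from $L$ under the product structure $N(L) \cong L \times D^2$, the map $\phi$ is homotopic to $\phi_0 \circ r$, where $r : N(L) \to L$ is the retraction and $\phi_0 := \phi|_L : L \to SO(3)$ is exactly the map comparing $\tau|_L$ with the $3$-framing on $L$. By the definition of $a_s$, the restriction $\phi_0|_{L_s}$ represents $a_s \in \pi_1(SO(3)) \cong \Z_2$ for each component $L_s$. Since $L$ is a disjoint union of circles and $SO(3)$ has the $1$-type of $K(\Z_2, 1)$, the natural bijection $[L, SO(3)] \cong H^1(L)$ identifies $[\phi_0]$ with the unique class $\alpha \in H^1(L)$ satisfying $\langle \alpha, [L_s]_2 \rangle = a_s$ for every $s$, which already gives the desired Kronecker pairing formula.

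Finally, to close the loop, I would check that this $\alpha$ indeed satisfies $\delta(\alpha) = w_2(M, L)$. This follows from the classical obstruction-theoretic fact that, for a map $\psi : A \to X$ into an $(n-1)$-connected space $X$ with $\pi_n(X) = G$, the primary obstruction to extending $\psi$ over $Y \supseteq A$ coincides with the image of $[\psi]$ under $\delta : H^n(A; G) \to H^{n+1}(Y, A; G)$. Applied with $n = 1$, $G = \Z_2$, $(Y, A) = (M, L)$ and $\psi = \phi_0$ — legitimate here because the degree-$2$ primary obstruction depends only on $\pi_1$ of the target — this yields $\delta(\alpha) = w_2(M, L)$. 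The only step that really requires care is this last obstruction-theoretic identification; if preferred, one can instead argue at the cochain level, choosing a CW decomposition of $M$ with $L$ as a subcomplex and explicitly checking that the obstruction $2$-cocycle for extending $\phi$ equals the coboundary of the $1$-cocycle representing $\phi_0$.
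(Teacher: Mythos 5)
Your argument is correct and follows exactly the route the paper indicates: the paper omits the proof, saying only that the lemma ``follows from the definition of the obstruction class $w_2(M,L)$,'' and your obstruction-theoretic unwinding (comparing the framing with $\tau$ to get a map $\phi_0 : L \to SO(3)$, identifying $[\phi_0]$ with $\alpha \in H^1(L)$ via $\pi_1(SO(3)) \cong \Z_2$, and using the standard fact that the primary obstruction to extending $\phi_0$ over $M$ is $\delta(\alpha)$) is precisely the omitted detail. No gaps.
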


The above lemma follows from the definition
of the obstruction class $w_2(M, L)$. We omit the details.

For example, if the framing on $L$ coincides with $\tau$
up to homotopy, then $\alpha = 0$ and consequently
we have $w_2(M, L) = 0$.

Note that the framing $\tau$ may not be unique.
The set of homotopy classes of such framings
is in one-to-one correspondence with the
homotopy set $[M, SO(3)]$. If we consider
the set of homotopy classes of framings
on the $2$--skeleton of $M$, then each such
framing up to homotopy defines a \emph{spin
structure} on $M$, and the set of spin
structures is in one-to-one correspondence with
$H^1(M)$ (see \cite{Milnor}).

By the cohomology exact sequence,
$$H^1(M) \spmapright{i^*} H^1(L) 
\spmapright{\delta} H^2(M, L) \spmapright{j^*} H^2(M),$$
we see that for every element $\gamma \in \Image{i^*}$,
we could choose $\alpha + \gamma$ instead of $\alpha$,
where $i : L \to M$ is the inclusion map.
The observation in the previous paragraph shows that
this corresponds to choosing another framing, say $\tau'$,
which is ``twisted along $\gamma$''.

\begin{rmk}
As we saw in Remark~\ref{rmk:w}, $w_2(M, L)$
is in the kernel of $$j^* : H^2(M, L) \spmapright{} H^2(M),$$
which coincides with $\Image{\delta} \cong H^1(L)/\Image{i^*}$.
Note that this latter group is always non-trivial,
since $L$ bounds a compact surface in $M$ and
hence $[L]_2 = 0$ in $H_1(M)$. 

If we change the framing of a component $L_s$ of $L$, then
$w_2(M, L)$ changes in general. The difference
is described by $\delta[L_s]_2^*$, where $[L_s]_2^*$
is the dual to the homology class $[L_s]_2 \in H_1(L)$
represented by $L_s$ with respect to the basis
of $H_1(L)$ consisting of the homology classes
represented by the components of $L$.
This follows from the observation described in
\cite[pp.~520--521]{Ker}.
(However, we need to be careful, since if we change
the framing of $L_s$, then the resulting framed link may
not be framed null-cobordant any more.)
\end{rmk}

\begin{rmk}
Let $L$ be an oriented link in a closed oriented $3$--manifold $M$.
Then, we can easily show that it bounds a compact oriented
surface in $M$ if and only if $L$ represents zero
in $H_1(M; \Z)$.
\end{rmk}

In order to apply Theorem~\ref{thm:main} in practical
situations, we need to identify the obstruction $w_2(M, L)$.
First, we have the following.

\begin{prop}
Let $L$ be an oriented framed link which bounds a compact 
oriented surface $V$ consistent with the framing.
Let $\alpha \in H^1(L)$ be an element such that
$\delta(\alpha) = w_2(M, L)$.
Then, we have
\begin{eqnarray*}
\langle w_2(M, L), [V, \partial V]_2 \rangle & = &  
\langle \delta(\alpha), [V, \partial V]_2 \rangle \\
& = & \langle \alpha, [L]_2 \rangle \\
& \equiv & \chi(V) \pmod{2} \\
& \equiv & \sharp L \pmod{2},
\end{eqnarray*}
where $\langle \cdot \, , \, \cdot \rangle$ is the Kronecker product,
$[V, \partial V]_2 \in H_2(M, L)$ is the
fundamental class of $V$ in $\Z_2$--coefficients,
$\chi(V)$ denotes the Euler characteristic 
of $V$, and $\sharp L$ denotes the number of
components of $L$.
\end{prop}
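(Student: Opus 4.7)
The proof splits into four successive equalities, three of which should be routine and one of which carries the genuine content. First, the equality $\langle w_2(M,L), [V,\partial V]_2\rangle = \langle \delta(\alpha), [V, \partial V]_2\rangle$ is immediate from the hypothesis $\delta(\alpha) = w_2(M, L)$. Next, $\langle \delta(\alpha), [V, \partial V]_2\rangle = \langle \alpha, [L]_2\rangle$ will follow from the standard adjointness of the cohomology connecting homomorphism $\delta \co H^1(L) \to H^2(M, L)$ with the homology boundary map $\partial \co H_2(M, L) \to H_1(L)$ in the long exact sequence of the pair $(M, L)$: since $\partial[V, \partial V]_2 = [L]_2$, we obtain $\langle \delta(\alpha), [V, \partial V]_2\rangle = \langle \alpha, \partial[V, \partial V]_2\rangle = \langle \alpha, [L]_2\rangle$. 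Finally, the bottom equality $\chi(V) \equiv \sharp L \pmod{2}$ is a classical computation for compact oriented surfaces: letting $g_i$ and $n_i$ denote the genus and number of boundary components of the $i$-th connected component of $V$, one has $\chi(V) = \sum_i (2 - 2g_i - n_i) \equiv \sum_i n_i = \sharp L \pmod{2}$.

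The only substantive step is $\langle \alpha, [L]_2 \rangle \equiv \chi(V) \pmod{2}$. By the Lemma just proved, the left-hand side equals $\sum_s a_s \pmod{2}$, where $a_s \in \pi_1(SO(3)) \cong \Z_2$ records the discrepancy between the framing of $L_s$ induced by the framed link structure and the chosen global framing $\tau$ of $TM$. My plan is to use $V$ itself as a cobordism interpolating between these two framings along $L$. Over $V$, the bundle $TM|_V$ splits as $TV \oplus \nu(V)$, where $\nu(V)$ is an oriented (hence trivial) line bundle; on $L$, the trivialization of $TV|_L$ given by (outward normal in $V$, positive tangent to $L$) together with a chosen trivialization of $\nu(V)$ reproduces, up to a canonical rearrangement, the framed link framing of $TM|_L$. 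Comparing this $V$-induced trivialization with $\tau|_L$ on each $L_s$ then yields a classifying map $L_s \to SO(3)$ whose homotopy class is precisely $a_s$, so that $\sum_s a_s$ can be interpreted as the total obstruction, lying in $H^2(V, \partial V; \pi_1(SO(3))) \cong \Z_2$, to extending the comparison map across all of $V$.

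The main obstacle will then be to identify this extension obstruction with $\chi(V) \pmod{2}$. Since $TM$ is globally trivialized by $\tau$ and $\nu(V)$ is trivial, extending the comparison map across $V$ is equivalent to extending the boundary trivialization of $TV|_L$ over all of $V$ through trivializations of $TV$ itself. For the oriented rank-$2$ bundle $TV$, this obstruction is the relative Euler class $e(TV, \partial V) \in H^2(V, \partial V; \Z) \cong \Z$, which evaluates on $[V, \partial V]$ to $\chi(V)$. Under the stabilization $SO(2) \hookrightarrow SO(3)$ (realized by direct sum with $\nu(V)$), the mod-$2$ reduction of $\chi(V)$ matches the $\pi_1(SO(3))$-valued obstruction computed above, yielding $\sum_s a_s \equiv \chi(V) \pmod{2}$ and completing the chain of equalities.
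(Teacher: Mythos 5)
Your proof is correct, but it takes a different route from the argument the paper actually writes out. The paper offers two proofs: it merely \emph{mentions} a Poincar\'e--Hopf-style argument (``decomposing $V$ into simplices and computing the contribution of each simplex'', details omitted), and then gives as its real argument a Morse-theoretic one: using \cite{Saeki94} it realizes $V$ as $f^{-1}(\ell)$ for a half-line $\ell$ emanating from the regular value $y$, observes that $f|_V : V \to \ell$ is a Morse function whose number of critical points equals $\sharp(V \cap S(f)) = \langle w_2(M,L),[V,\partial V]_2\rangle$ because $[S(f)]_2$ is Poincar\'e dual to $w_2(M,L)$, and concludes since that count is congruent to $\chi(V)$ mod $2$. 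Your argument is instead a clean bundle-theoretic rendering of the first (omitted) proof: you identify $\langle\alpha,[L]_2\rangle=\sum_s a_s$ with the obstruction in $H^2(V,\partial V;\pi_1(SO(3)))\cong\Z_2$ to extending the boundary framing of $TM|_L$ over $V$, split $TM|_V=TV\oplus\nu(V)$, and invoke the relative Euler class of $TV$ together with naturality of the obstruction under the stabilization $SO(2)\hookrightarrow SO(3)$ (which on $\pi_1$ is reduction mod $2$) to get $\chi(V)\bmod 2$. All the steps you use are standard (adjointness $\langle\delta\alpha,[V,\partial V]_2\rangle=\langle\alpha,\partial[V,\partial V]_2\rangle$; the relative Euler number of $TV$ rel the tangent-plus-normal boundary framing equals $\chi(V)$), and the one imprecision --- taking the \emph{outward} normal in $V$ rather than the pushoff direction dictated by the framed longitude --- is harmless, since the two boundary framings of $TV|_L$ differ by a constant rotation and so give the same obstruction. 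What the paper's Morse-theoretic proof buys is that it exhibits the intersection $V\cap S(f)$ concretely and requires no obstruction theory beyond the already-used duality of $[S(f)]_2$ with $w_2(M,L)$; what your proof buys is independence from the realization theorem of \cite{Saeki94} and a direct explanation of why the answer is an Euler characteristic.
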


The above proposition is similar to the
Poincar\'e--Hopf theorem for vector fields.
It can be proved by decomposing $V$ into
simplices, and by computing the contribution
of each simplex. We omit the details.

The above proposition can also be proved as follows.
First, we construct an excellent map $f : M \to \R^2$
such that for a regular value $y$, $f^{-1}(y)$
coincides with $L$ as a framed link and that for a half line $\ell$
emanating from $y$ in $\R^2$ transverse to $f$, 
we have $f^{-1}(\ell) = V$.
Such an excellent map is constructed in \cite{Saeki94}.
Then, the map $f|_V : V \to \ell$ is a Morse
function and its number of critical points
coincides with the number of intersection points
of $V$ and $S(f)$. Since $[S(f)]_2$ is Poincar\'e
dual to $w_2(M, L)$, we see that this number coincides with
$\langle w_2(M, L), [V, \partial V]_2 \rangle$. Since the
number of critical points of the Morse function
is congruent modulo $2$ to $\chi(V)$, we get the
result. The congruence $\chi(V) \equiv \sharp L \pmod{2}$
is obvious, since $V$ is a compact orientable surface
and $\partial V = L$.

\begin{rmk}\label{rmk:odd}
The above proposition shows the following.
If $f : M \to \R^2$ is an excellent map and
$y \in \R^2$ is a regular value such that
$L = f^{-1}(y)$ has an odd number of components,
then every compact oriented surface $V$
in $M$ bounded by $L$ compatible with
the framing of $L$ intersects with $S(f)$.
If $H_1(M) = 0$, then this implies that the
$\Z_2$ linking number of $L$ and $S(f)$ in $M$
does not vanish.
Thus, in this case, the regular fiber $L$ necessarily
links with $S(f)$ (see Fig.~\ref{fig7}).
\end{rmk}

\begin{figure}[t]
\centering
\psfrag{l}{$\ell$}
\psfrag{v}{$V = f^{-1}(\ell)$}
\psfrag{s}{$S(f)$}
\psfrag{F}{$L$}
\psfrag{r}{$\R^2$}
\psfrag{m}{$M$}
\psfrag{f}{$f$}
\psfrag{fs}{$f(S(f))$}
\psfrag{y}{$y$}
\includegraphics[width=\linewidth,height=0.45\textheight,
keepaspectratio]{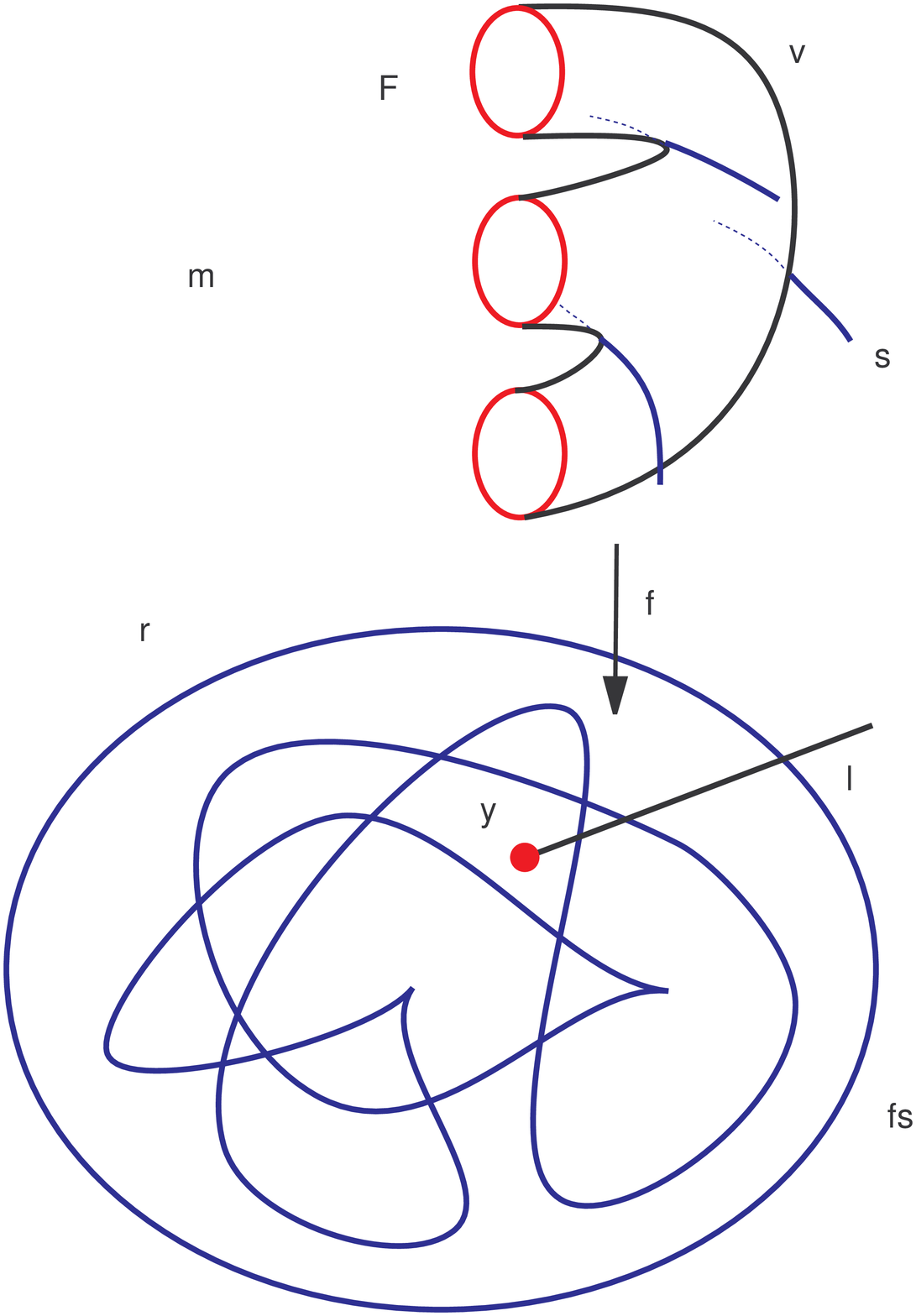}
\caption{Regular fiber with an odd number of components links with
the singular point set.}
\label{fig7}
\end{figure}

Let us now consider the case of a local knot
component. Suppose that the oriented framed link
$L$ contains a component $K$ that lies in
an open set $U$ in $M$ diffeomorphic to $\R^3$.
In the following, let us identify $U$ with $\R^3$.
In this case, up to homotopy, we may assume that
the framing $\tau$ for $M$ is given
by the standard framing of $\R^3$.

Let $\pi : \R^3 \to H$ be the orthogonal projection
onto a generic hyperplane $H \cong \R^2$
in the sense that $\pi|_K$ is an immersion with
normal crossings. On the other hand,
we may assume that the first vector field
defining the framing $\tau$ over $K$ is tangent to
$K$ consistent with the orientation.
Then, we can count the number of times modulo $2$ the
$2$--framing consisting of the remaining two vector
fields rotates vertically with respect
to $H$, which we denote by $t_v(K)$.
Then, we have the following.

\begin{lem}
Let $\alpha \in H^1(L)$ be an element such that
$\delta(\alpha) = w_2(M, L)$. Then,
we have
$$\langle \alpha, [K]_2 \rangle \equiv
t_v(K) + c(K) +1 \pmod{2},$$
where $c(K)$ denotes the number of
crossings of the immersion $\pi|_K : K \to H$
with normal crossings.
\end{lem}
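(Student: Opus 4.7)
The plan is to recast $\langle \alpha, [K]_2 \rangle$ as a class $a_K \in \pi_1(SO(3)) \cong \Z_2$ via the previous lemma, and then compute $a_K$ by factoring the comparison through an intermediate ``projection framing'' of $K$. By the previous lemma, $a_K$ is the class of the loop $\phi_K \co K \to SO(3)$ sending $p$ to the rotation carrying $(\partial_x, \partial_y, \partial_z)$ to the given framing $(T(p), n_1(p), n_2(p))$, where $T$ is the unit tangent and $(n_1, n_2)$ is the given normal $2$-framing.

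Along $K$, introduce the projection framing $(T, E_2, E_3)$, with $E_2$ the unit horizontal vector such that $\{\pi_*(T), E_2\}$ is a positively oriented basis of $H$, and $E_3 = T \times E_2$. This is well-defined because $\pi|_K$ is an immersion. Let $\phi_K^{\mathrm{proj}} \co K \to SO(3)$ denote the associated loop. Because $\phi_K$ and $\phi_K^{\mathrm{proj}}$ agree on the first basis vector, the pointwise quotient $(\phi_K^{\mathrm{proj}})^{-1} \phi_K$ takes values in the $SO(2)$-stabilizer of $\partial_x$ in $\R^3$, and its winding number modulo $2$ is, by definition, $t_v(K)$. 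Since $\pi_1$ of a topological group is computed by pointwise multiplication of loops, and the inclusion $\pi_1(SO(2)) \cong \Z \to \pi_1(SO(3)) \cong \Z_2$ is mod-$2$ reduction,
\[
a_K \equiv [\phi_K^{\mathrm{proj}}] + t_v(K) \pmod 2.
\]

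To compute $[\phi_K^{\mathrm{proj}}]$, apply the flattening homotopy $K_t(s) = (C(s), (1-t) z(s))$, $t \in [0,1]$, where $K(s) = (C(s), z(s))$. Throughout, the horizontal tangent component is the unchanged $C'(s) \neq 0$, so the projection framing is defined continuously in $(s,t)$ and the $\pi_1(SO(3))$-class is preserved. At $t = 1$, the curve $K_1$ lies in $H$ (as an immersion, not an embedding), the tangent is horizontal, and $E_3 = \partial_z$; hence $\phi_{K_1}^{\mathrm{proj}}$ takes values in the $SO(2)$-subgroup of rotations about the $z$-axis, with degree there equal to the rotation number (Whitney index) $r(C)$ of the planar immersed curve $C = \pi \circ K$. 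Thus $[\phi_K^{\mathrm{proj}}] \equiv r(C) \pmod 2$.

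The proof concludes with Whitney's classical parity relation for a generic immersion $S^1 \to \R^2$ with $c$ transverse double points: $r \equiv c + 1 \pmod 2$, verified on the embedded circle $(r, c) = (\pm 1, 0)$ and holding elsewhere because the parities of $r$ and $c$ are regular-homotopy invariants. Substituting $c(C) = c(K)$ yields $a_K \equiv t_v(K) + c(K) + 1 \pmod 2$. I expect the main subtlety to lie in the flattening step: once $C$ acquires crossings, the intermediate $K_t$ cease to be embedded, but the projection framing depends only on the parametrized tangent field and its non-verticality, both of which persist throughout the homotopy, so the deformation argument remains valid.
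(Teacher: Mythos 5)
Your proposal is correct and follows essentially the same route as the paper's (very terse) proof: identify $\langle \alpha, [K]_2\rangle$ with the comparison class in $\pi_1(SO(3))\cong\Z_2$, split that class into the winding number of the projected tangent plus the vertical twisting $t_v(K)$, and finish with Whitney's parity formula $r \equiv c(K)+1 \pmod 2$. The only difference is that you make explicit, via the projection framing and the flattening homotopy, the bookkeeping that the paper delegates to the citation of Kervaire, pp.~520--521.
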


\begin{proof}
Since the framing $\tau$ is standard on $U = \R^3$,
in order for the obstruction to vanish on $K$,
we need to have that the winding number of
$\pi(K)$ on $H$ is even as long as $t_v(K) = 0$. 
On the other hand,
by \cite{Whitney}, we have that the winding number
has the same parity as $c(K)$+1. 
Thus, by the observation in \cite[pp.~520--521]{Ker},
we have the conclusion.
\end{proof}

\section{Examples}\label{section35}

In this section, we give some explicit examples
which imply that the answer to the problem
posed in \S\ref{section1} for closed orientable $3$--manifolds
is negative in general.

\begin{exam}\label{exam:h1}
Let $L$ be a $2$--component framed link $h^{-1}(\{y_1, y_2\})$
in $S^3$ that consists of two framed fibers of the 
positive Hopf fibration
$h : S^3 \to S^2$, for $y_1 \neq y_2$ in $S^2$,
where we reverse
the orientation of one of the components
and the framings are induced by $h$.
By taking the inverse image $h^{-1}(a)$ of an embedded
arc $a$ in $S^2$ connecting $y_1$ and $y_2$,
we see that $L$ is framed null-cobordant (see Fig.~\ref{fig8}).
By the above lemma, we have
that $w_2(S^3, L)$ vanishes.
Therefore, by Theorem~\ref{thm:main},
an arbitrary link $J$ split from $L$ can be realized
as the singular point set of an excellent map
$S^3 \to \R^2$ with $L$ a framed regular fiber.
In this example, the components of the regular fiber $L$
do not link with the singular point set! 

\begin{figure}[t]
\centering
\psfrag{l}{$L$}
\psfrag{s3}{$S^3$}
\psfrag{s2}{$S^2$}
\psfrag{q1}{$y_1$}
\psfrag{q2}{$y_2$}
\psfrag{a}{$a$}
\psfrag{h}{$h$}
\includegraphics[width=\linewidth,height=0.3\textheight,
keepaspectratio]{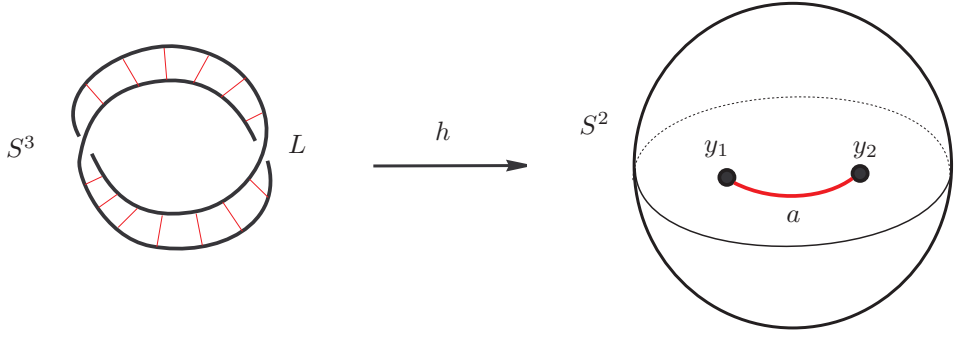}
\caption{Framed Hopf link which is null-cobordant}
\label{fig8}
\end{figure}

Note that $L$ has an even number of components.
This is consistent with the observation given in
Remark~\ref{rmk:odd}.

Let $M$ be an arbitrary closed oriented $3$--manifold.
By considering the above $2$--component
link $L$ as embedded in $\R^3 \subset S^3$ and by embedding it
to $M$, we get the same result for $M$ as well.
This gives
counter examples to the question presented in \S\ref{section1}
for closed orientable $3$--manifolds.
\end{exam}

We will give two explicit examples of excellent maps
on $S^3$ which give counter examples.

\begin{exam}\label{ex2}
Let $h : S^3 \to S^2$ be the (positive) Hopf
fibration. Let $p_N = (0, 0, 1)$ and $p_S = (0, 0, -1)$ be the north and
the south poles of $S^2$, respectively, where we identify
$S^2$ with the unit sphere in $\R^3$.
We decompose $S^2$ as
$S^2 = D_N \cup D_S \cup A$, where
$D_N$ (or $D_S$) is a small $2$--disk neighborhood
of $p_N$ (resp.\ $p_S$) in $S^2$ with $D_N \cap D_S =
\emptyset$, and $A$ is the annulus
obtained as the closure of $S^2 \setminus (D_N \cup D_S)$.

Note that the fibration $h$ is trivial on each of $D_N$,
$D_S$ and $A$. Let us fix a trivialization 
\begin{equation}
h^{-1}(A)
= S^1 \times A = S^1 \times ([-1, 1] \times S^1)
= (S^1 \times [-1, 1]) \times S^1,
\label{eqn1}
\end{equation}
where we identify $A$ with $[-1, 1] \times S^1$
so that $\{1\} \times S^1$ (or
$\{-1\} \times S^1$) coincides with
$\partial D_N$ (resp.\ $\partial D_S$).
We take the trivialization of $h^{-1}(A)$ in such a way that
it extends to a trivialization of $h$ over $D_N \cup A$.
Note that in (\ref{eqn1}), the first $S^1$--factor
corresponds to the fibers of $h$ and the last $S^1$--factor
corresponds to the equatorial direction of $S^2$ 
in the target.

Let $k : S^1 \times [-1, 1] \to [1, \infty)$ be a Morse function such that
\begin{itemize}
\item[(1)] $k^{-1}(1) = S^1 \times \{-1, 1\}$,
\item[(2)] $k$ has no critical point in a small neighborhood
of $S^1 \times \{-1, 1\}$,
\item[(3)] $k$ has exactly two critical points in such a way that
one of them has index $1$ and the other has index $2$.
\end{itemize}

Using the above ingredients,
let us now construct an excellent map
$f : S^3 \to \R^2$ as follows.
On $h^{-1}(D_N)$ (or on $h^{-1}(D_S)$), we define
$f = i_N \circ h$ (resp.\ $f = i_S \circ h$), 
where $i_N : D_N \to \R^2$ (resp.\ $i_S : D_S \to \R^2$)
is an orientation preserving (resp.\ reversing)
embedding onto the unit disk in $\R^2$
such that $i_N(p_N) = i_S(p_S)$ coincides with the
origin $\mathbf{0}$. Furthermore, we choose $i_N$ and $i_S$
such that for each $t \in S^1$, $i_N(1, t) = i_S(-1, t)$ holds
for $(1, t)$ and $(-1, t) \in [-1, 1] \times S^1 = A$.
On $h^{-1}(A) = (S^1 \times [-1, 1]) \times S^1$, 
we define $f$ by
$f(x, t) = \eta(k(x), t)$
for $x \in S^1 \times [-1, 1]$ and $t \in S^1$,
where $\eta : [1, \infty) \times S^1 \to \R^2$
is an embedding such that its image is the
complement of the open unit disk in $\R^2$
and that $\eta(\{1\} \times S^1)$ coincides with
the unit circle in $\R^2$. We choose $\eta$
consistently with $i_N$ and $i_S$, i.e.\ we require
the condition that $\eta(1, t) = i_N(1, t) = i_S(-1, t)$
for every $t \in S^1$. Then, the map $f : S^3 \to \R^2$ 
thus constructed is well-defined.

By modifying $f$ near the attached tori $h^{-1}(\partial D_N
\cup \partial D_S)$ appropriately,
we may assume that $f$ is a smooth excellent map.
Furthermore, the origin $\mathbf{0}$ of $\R^2$
is a regular value and $f^{-1}(\mathbf{0})$
is a framed regular fiber as in Example~\ref{exam:h1}.
Note that $S(f)$ has two components: one consists
of definite fold singularities and the other of indefinite fold 
singularities.

The situation is as depicted in Fig.~\ref{fig9}.
The torus in the top figure represents $h^{-1}(\{0\} \times S^1)$
for $\{0\} \times S^1 \subset [-1, 1] \times S^1 = A$,
and it separates the regular fiber components $h^{-1}(p_N)$
and $h^{-1}(p_S)$ of $f$. The annulus depicts $h^{-1}([1-\varepsilon, 1]
\times \{t\})$ for some small $\varepsilon > 0$ and for some $t \in S^1$.
We may assume that the critical points of $k$ on $h^{-1}([-1, 1] \times \{t\})$
are contained in $h^{-1}([1-\varepsilon, 1]
\times \{t\})$.
As $t$ varies in $S^1$ in the
positive direction, the annulus rotates as depicted in that figure.
Therefore, the critical points of $k$ on the annulus sweeps out
a $2$--component link $S(f) = S_0(f) \cup S_1(f)$ as
depicted in the bottom figure, where $S_0(f)$ (or $S_1(f)$)
is the set of definite (resp.\ indefinite) fold
singularities of $f$.

\begin{figure}[t]
\centering
\psfrag{h}{$h^{-1}(p_N)$}
\psfrag{hs}{$h^{-1}(p_S)$}
\psfrag{s0}{$S_0(f)$}
\psfrag{s1}{$S_1(f)$}
\psfrag{b}{$h^{-1}([1-\varepsilon, 1] \times \{t\})$}
\psfrag{T}{$h^{-1}(\{0\} \times S^1)$}
\includegraphics[width=\linewidth,height=0.5\textheight,
keepaspectratio]{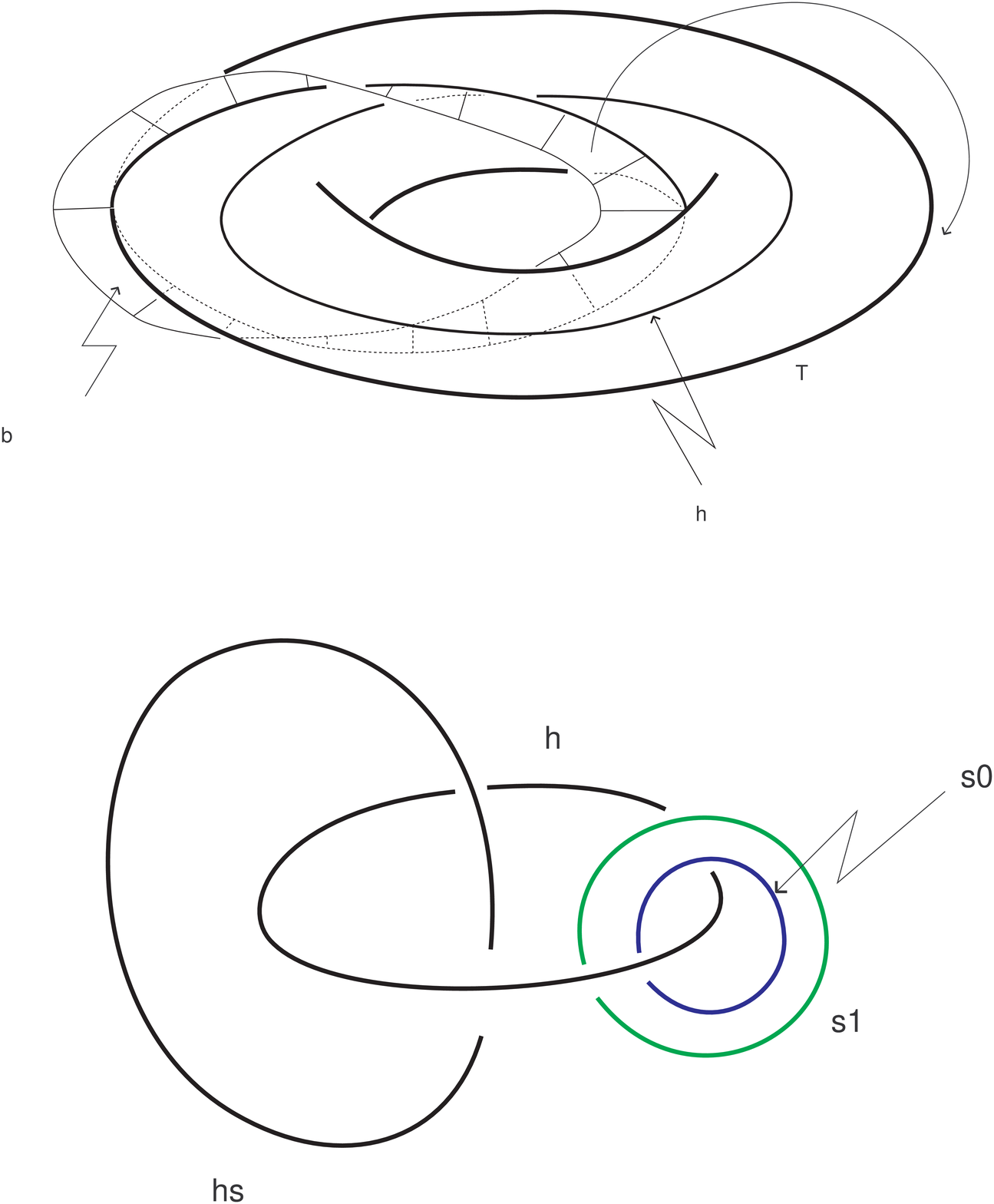}
\caption{Framed regular fiber and the singular point
set of the excellent map $f : S^3 \to S^2$ in
Example~\ref{ex2}}
\label{fig9}
\end{figure}

In this example, the regular fiber component
$h^{-1}(p_S)$ of $f$ does not link with
$S(f)$.
\end{exam}

\begin{exam}
We have yet another example $g : S^3 \to \R^2$
constructed as follows. In the following, we
use the same notations as in Example~\ref{ex2}.
We define $g$ on $h^{-1}(D_N \cup D_S)$ in exactly the same way
as $f$. On the other hand, we replace $f$ on $h^{-1}(A)$
with the map $F$ defined by
$F(x, t) = \eta (k_t(x), t)$ for $x \in S^1 \times [-1, 1]$
and $t \in S^1$, where $\eta : [1, \infty) \times S^1 \to \R^2$
is the embedding as in the above example, and
$k_t : S^1 \times [-1, 1] \to [1, \infty)$, $t \in S^1$,
is a generic $1$--parameter family of functions on
the annulus whose
level sets are as depicted in Fig.~\ref{fig10},
where the red circles depict the boundary components
of the annulus and correspond to the level set $k_t^{-1}(1)$.
Note that for $t \in S^1$, $k_t$ is a Morse function,
except for two values where a birth or a death
of a pair of critical points occurs. 
In the figure, the
blue points depict critical points of index $2$ and the
green ones of index $1$.
The singular value
set of $F$ is as depicted in Fig.~\ref{fig11}, and
the critical points in Fig.~\ref{fig10} correspond to
the curves $\alpha, \beta, \gamma, \delta, \varepsilon$
and $\zeta$ in Fig.~\ref{fig11}.
\newpage

\begin{figure}[t]
\centering
\psfrag{t1}{$t_1$}
\psfrag{t2}{$t_2$}
\psfrag{t3}{$t_3$}
\psfrag{t4}{$t_4$}
\psfrag{a}{$\alpha$}
\psfrag{b}{$\gamma$}
\psfrag{c}{$\beta$}
\psfrag{d}{$\delta$}
\psfrag{e}{$\varepsilon$}
\psfrag{f}{$\zeta$}
\psfrag{birth}{birth of $\beta$ and $\delta$}
\psfrag{death}{death of $\alpha$ and $\gamma$}
\includegraphics[width=\linewidth,height=0.461\textheight,
keepaspectratio]{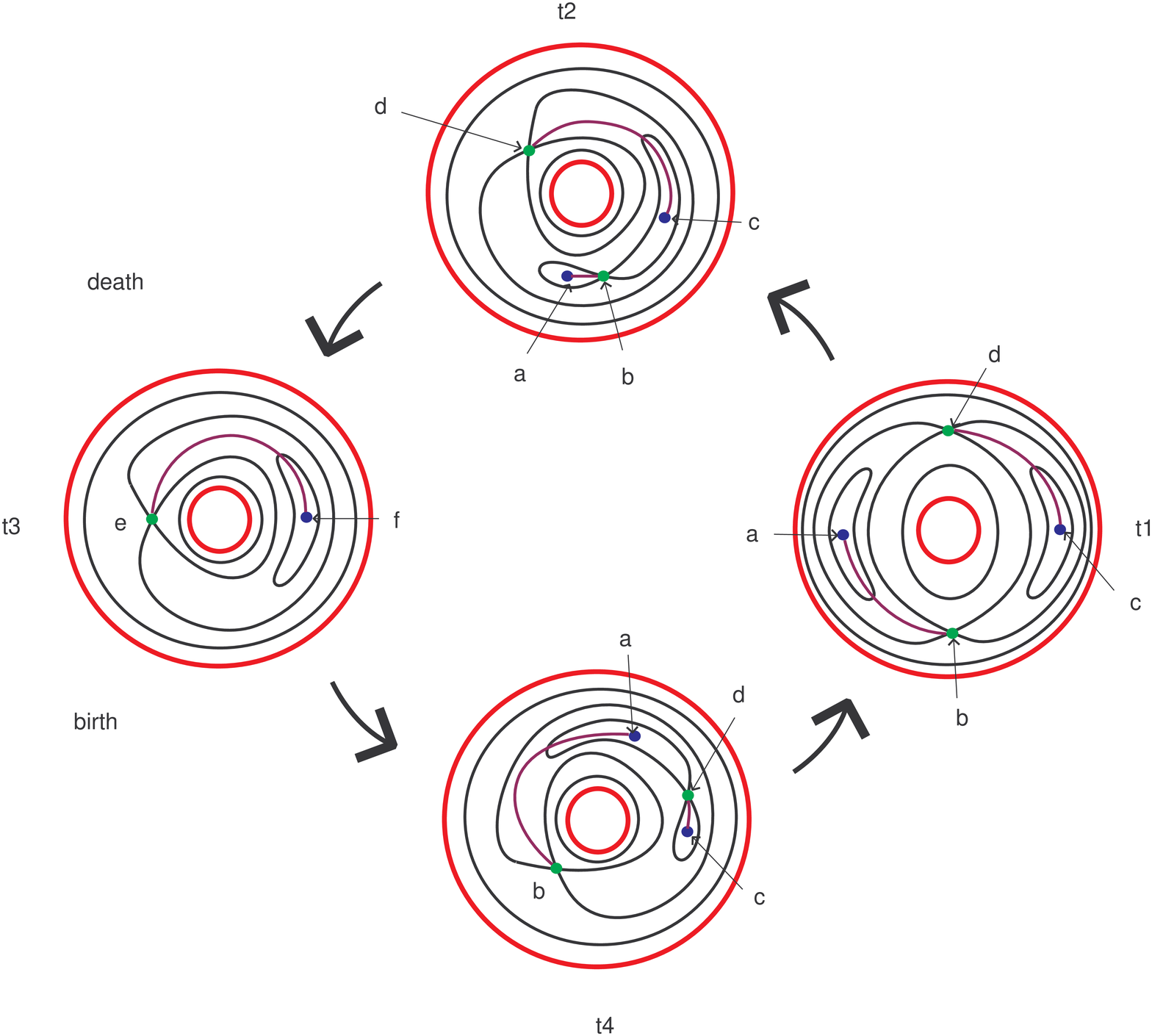}
\caption{Level sets of $k_t : S^1 \times [-1, 1] \to [1, \infty)$ for
$t = t_1, t_2, t_3$ and $t_4 \in S^1$, which correspond
to those in Fig.~\ref{fig11}.}
\label{fig10}
\end{figure}

\begin{figure}[thb]
\centering
\psfrag{t1}{$t_1$}
\psfrag{t2}{$t_2$}
\psfrag{t3}{$t_3$}
\psfrag{t4}{$t_4$}
\psfrag{a}{$\alpha$}
\psfrag{b}{$\gamma$}
\psfrag{c}{$\beta$}
\psfrag{d}{$\delta$}
\psfrag{e}{$\varepsilon$}
\psfrag{f}{$\zeta$}
\includegraphics[width=\linewidth,height=0.3\textheight,
keepaspectratio]{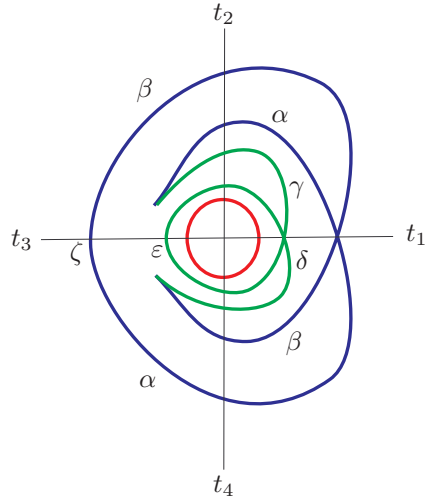}
\caption{Singular value set of $F$, where the red circle
in the center corresponds to the image of
$\eta(\{1\} \times S^1)$, the blue curve corresponds to the
image of the definite fold singularities, and the green one
to the image of the indefinite fold singularities. The values
$t_1,t_2,t_3$ and $t_4 \in S^1$ correspond to those
in Fig.~\ref{fig10}.}
\label{fig11}
\end{figure}
\newpage

In this way, we get an excellent map $g : S^3 \to \R^2$
with exactly two cusp singularities such that $S(g)$
consists of a circle.
Furthermore, we see that $S(g)$ bounds a $2$--disk
in $S^3$ disjoint from the regular fiber
$g^{-1}(\mathbf{0})$. Such a disk can
be found by tracing the purple curves in Fig.~\ref{fig10}.
Therefore, $S(g)$ is an unknotted circle in $S^3$
and is split from the regular fiber over the origin $\mathbf{0}$.
This again gives a desired counter example.
\end{exam}

\begin{rmk}\label{QC}
The above examples show that the answer to the following
question is, in general, negative for excellent maps of $S^3$
into $\R^2$:
\emph{must every component of a regular fiber
be linked by at least one component of the singular 
point set ?} This question was originally posed for
maps of $\R^3$ into $\R^2$, by Professor David Chillingworth
(see \S\ref{section4}).
\end{rmk}

\begin{rmk}
Let $f : M \to \R^2$ be an excellent map
of a closed oriented $3$--manifold $M$.
We assume that $f$ is $C^\infty$ stable, i.e.\ $f|_{S(f)}$
satisfies certain transversality conditions
(for details, see \cite{GG, Levine1}).
Such a $C^\infty$ stable map $f$
is \emph{simple} if it has no cusp singularities
and for every $y \in f(S(f))$, each component of 
$f^{-1}(y)$ contains at most one singular point.
In this case, by \cite{Saeki96}, regular fibers,
the singular point set, or their unions are all
graph links: i.e.\ their exteriors are unions
of circle bundles over surfaces attached along
their torus boundaries. The realization problem
of graph links as regular fibers or the singular point set
has been addressed in \cite{Saeki96}.
See also \cite{Saeki93}.
\end{rmk}

\section{Maps of $\R^3$ into $\R^2$}\label{section4}

The following question was originally posed by
Professor David Chillingworth (see \S\ref{section1} 
and Remark~\ref{QC}).

\begin{prob}\label{prob:C}
For a generic map $f : \R^3 \to \R^2$,
must every component of a regular fiber
be linked by at least one component of 
the singular point set $S(f)$ ?
\end{prob}

In order to answer negatively to the above problem, we use
the following theorem which is due to 
Hector and Peralta-Salas \cite{HP}.

\begin{thm}[Hector and Peralta-Salas, 2012]\label{thm:HP}
Let $L = L_1 \cup L_2 \cup \cdots
\cup L_\mu \subset \R^3$ be an oriented link.
Then, there exist a submersion
$f : \R^3 \to \R^2$
and a regular value $y \in \R^2$ such that
$f^{-1}(y) = L$ if and only if for all $i$
with $1 \leq i \leq \mu$, we have
$$\sum_{j \neq i}\mathrm{lk}(L_i, L_j)
\equiv 1 \pmod 2,$$
where $\mathrm{lk}$ denotes the linking number.
\end{thm}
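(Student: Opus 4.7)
The plan is to prove both directions by associating to the submersion $f$ the normalized phase map $\phi(x) = (f(x) - y)/|f(x) - y| : \R^3 \setminus L \to S^1$, extracting its cohomology class, and comparing the $f$-induced framing of $T\R^3$ with the standard framing of $\R^3$.

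For necessity, I would first compute $\phi^*[\omega]$, where $[\omega]$ generates $H^1(S^1;\Z)$. Since $f$ is a submersion transverse to $y$ along $L$, the map $\phi$ has local degree $+1$ on the meridian $\mu_i$ of each component $L_i$, so
$$\phi^*[\omega] = \sum_{i=1}^{\mu} m_i^* \in H^1(\R^3 \setminus L;\Z),$$
where $m_i^*$ denotes the dual to $[\mu_i]$. Let $\ell_i$ denote the $f$-framed longitude of $L_i$; in the local chart $N(L_i) \cong L_i \times D^2$ with $f(p,v) = y+v$, we have $\ell_i = L_i \times \{v_0\}$ and $\phi|_{\ell_i}$ is constant, so $\langle \phi^*[\omega], [\ell_i] \rangle = 0$. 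Using a Seifert surface for $L_i$ in $\R^3$ gives
$$[\ell_i] = n_i^{(f)}[\mu_i] + \sum_{j \neq i} \mathrm{lk}(L_i, L_j)\,[\mu_j] \quad\text{in } H_1(\R^3 \setminus L;\Z),$$
where $n_i^{(f)} = \mathrm{lk}(L_i, \ell_i)$ is the $f$-framing integer. Evaluating $\phi^*[\omega]$ on both sides yields $n_i^{(f)} = -\sum_{j \neq i} \mathrm{lk}(L_i, L_j)$.

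The next step is to pin down the parity of $n_i^{(f)}$. The $3$-frame $(\nabla f_1, \nabla f_2, \nabla f_1 \times \nabla f_2)$ is nowhere zero on $\R^3$ and thus trivializes $T\R^3$ globally; since $\R^3$ is contractible, its comparison with the standard framing restricts on each $L_i$ to a null-homotopic map $L_i \to SO(3)$, so the invariant $a_i \in \pi_1(SO(3)) \cong \Z_2$ vanishes. A direct rotation-of-frame calculation on a round unknot (the tangent--normal frame of the Seifert framing rotates by $2\pi$ along the knot) shows that the Seifert framing itself already contributes the non-trivial element of $\pi_1(SO(3))$, so $a_i = 0$ is equivalent to $n_i^{(f)} \equiv 1 \pmod 2$. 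Combined with the formula above, this forces $\sum_{j \neq i} \mathrm{lk}(L_i, L_j) \equiv 1 \pmod 2$.

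For sufficiency, I would reverse the construction. Assuming the linking congruences, define $f$ on a tubular neighborhood $N(L) = \bigsqcup L_i \times D^2$ by $(p,v) \mapsto y+v$ with framing integer $n_i = -\sum_{j \neq i} \mathrm{lk}(L_i, L_j)$, which is odd and hence satisfies $a_i = 0$. The relative Stiefel--Whitney obstruction $w_2(\R^3, L) \in H^2(\R^3, L; \Z_2)$ then vanishes, so by the techniques of \cite{Saeki94} one can extend $f$ to an excellent map on $\R^3$ whose singular locus is $\Z_2$-null-homologous in $\R^3 \setminus L$; cusp-elimination and band-surgery moves analogous to those in the proof of Theorem~\ref{thm:main} then eliminate all remaining singular points, producing a submersion. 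The main obstacle is the non-compactness of $\R^3$: Theorem~\ref{thm:main} does not apply directly, and considerable care is needed to ensure that the extension and singularity-elimination procedure remain controlled near infinity, so that no singular points escape to or enter from infinity. Hector and Peralta-Salas handle this via an ad hoc geometric construction tailored to $\R^3$.
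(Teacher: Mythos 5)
First, a point of comparison: the paper does not prove Theorem~\ref{thm:HP} at all. It is quoted from Hector and Peralta-Salas \cite{HP} and used as a black box in \S\ref{section4}, so there is no proof in the paper to measure your argument against; your proposal has to stand on its own.

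Your necessity (``only if'') half is essentially correct and is the standard argument. The identity $n_i^{(f)} = -\sum_{j\neq i}\lk(L_i,L_j)$, obtained by evaluating $\phi^*[\omega]$ on meridians and on the $f$-framed longitude, is sound, as is the observation that the global frame $(\nabla f_1,\nabla f_2,\nabla f_1\times\nabla f_2)$ of $T\R^3$ forces the comparison class $a_i\in\pi_1(SO(3))$ to vanish. The one step you assert rather than prove is that $a_i=0$ is equivalent to $n_i^{(f)}$ being odd for a component of \emph{arbitrary} knot type: your rotation computation is carried out only for a round unknot. Transferring it to a general knot needs either the paper's local lemma $\langle\alpha,[K]_2\rangle\equiv t_v(K)+c(K)+1 \pmod 2$ combined with $t_v(K)\equiv n+c(K)\pmod 2$ for framing integer $n$, or an argument that $a_i+n_i \bmod 2$ is independent of the knot. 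This is a real but fillable omission.

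The genuine gap is in the sufficiency (``if'') direction. You define $f$ near $L$ with the correct odd framings, note that the relative obstruction vanishes, and then propose to extend to an excellent map and ``eliminate all remaining singular points'' by the band and cusp-elimination moves behind Theorem~\ref{thm:main}. That machinery cannot deliver this conclusion: it is formulated for closed manifolds, and, more importantly, it only moves the singular set within its $\Z_2$-homology class while keeping it non-empty (note that the remark following Theorem~\ref{thm:main} explicitly requires both $J_0\neq\emptyset$ and $J_1\neq\emptyset$; on a closed $M$ an excellent map with $S(f)=\emptyset$ does not even exist). Producing a genuine submersion of all of $\R^3$ with a prescribed compact fiber is exactly the hard content of \cite{HP}, and it requires open-manifold techniques rather than the surgery of singular loci from \cite{Saeki95}. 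Your closing sentence defers precisely this step to an ``ad hoc geometric construction'' in \cite{HP}, so the sufficiency half of the theorem is not actually proved by your proposal.
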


Now, let
$L$ be a link that satisfies the condition
as described in Theorem~\ref{thm:HP}
(for example, a Hopf link). 
Then, there exist a submersion $f : \R^3 \to \R^2$
and a regular value $y \in \R^2$ with
$L = f^{-1}(y)$.

Take a point $p \in \R^3 \setminus L$
and its small $3$--disk neighborhood $N(p) \subset
\R^3 \setminus L$.
Then, we can deform $f$ in $N(p)$
so that the resulting map $g : \R^3
\to \R^2$ is generic and $S(g)$ is an
unknotted circle in $N(p)$ (use the move called ``lip'' or
``birth''. See \cite[Lemma~3.1]{Saeki95}).
Then, no component of $L = g^{-1}(y)$ 
links with $S(g)$.

This gives a negative answer
to Problem~\ref{prob:C}.

\section*{Acknowledgment}\label{ack}
The author is most thankful to Professor David Chillingworth
for posing the question as in Problem~\ref{prob:C},
which motivated the study developed in this paper. He would also
like to thank Kazuto Takao for helpful discussions.
The author has been supported in part by JSPS KAKENHI Grant Numbers 
JP15K13438, JP16K13754, JP16H03936, JP17H06128.


\begin{thebibliography}{99}

%
%
%
\bibitem{CRS}M.~Cencelj, D.~Repov\v{s} and M.B.~Skopenkov, 
{\em Classification of framed links in $3$--manifolds},
Proc.\ Indian Acad.\ Sci.\ (Math. Sci.) \textbf{117} (2007), 301--306. 
%
%
%
%
%
%
%
%
%
%
\bibitem{GG}M.~Golubitsky and V.~Guillemin, {\sl Stable mappings
and their singularities}, Grad.\ Texts in Math., Vol.~14, 
Springer--Verlag, New York,
Heidelberg, Berlin, 1973.
%
\bibitem{HP}G.~Hector and D.~Peralta-Salas,
{\em Integrable embeddings and foliations},
Amer.\ J.\ Math.\ \textbf{134} (2012), 773--825.
%
%
\bibitem{Ker}M.A.~Kervaire, {\em Relative characteristic classes},
Amer.\ J.\ Math.\ \textbf{79} (1957), 517--558.
%
%
%
%
%
%
\bibitem{Levine1}H.~Levine, {\sl Classifying immersions into $\R^4$
over stable maps of $3$-manifolds into $\R^2$},
Lecture Notes in Math., Vol.~1157, Springer--Verlag, Berlin, 1985.
%
%
%
%
\bibitem{Milnor}J.~Milnor,
{\em Spin structures on manifolds}, 
Enseignement Math.\ (2)
\textbf{9} (1963), 198--203.
%
%
%
%
%
%
\bibitem{Saeki93}O.~Saeki, {\em Simple stable maps of $3$--manifolds 
into surfaces II}, J.\ Fac.\ Sci.\ Univ.\ Tokyo \textbf{40} 
(1993), 73--124. 
%
\bibitem{Saeki94}O.~Saeki, {\em Stable maps and links in $3$--manifolds}, 
Kodai Math.\ J.\ \textbf{17} (1994), 518--529. 
%
\bibitem{Saeki95}
O.~Saeki, {\em Constructing generic smooth maps of a 
manifold into a surface with prescribed singular loci}, 
Ann.\ Inst.\ Fourier (Grenoble) \textbf{45} (1995), 1135--1162.
%
\bibitem{Saeki96}O. Saeki, 
{\em Simple stable maps of $3$-manifolds into surfaces}, 
Topology \textbf{35} (1996), 671--698. 
%
%
%
%
%
%
\bibitem{Thom55} R.~Thom, 
{\em Les singularit\'es des applications diff\'erentiables}, 
Ann.\ Inst.\ Fourier, Grenoble 
\textbf{6} (1955--1956), 43--87.
%
%
%
%
\bibitem{Whitney}H.~Whitney, {\em 
On regular closed curves in the plane},
Compo.\ Math.\ \textbf{4} (1937), 276--284.
%
\bibitem{Whitney55}H.~Whitney, {\em On singularities of 
mappings of Euclidean spaces. I. Mappings of the plane into
the plane}, Ann.\ of Math.\ (2) \textbf{62} (1955), 374--410.
%


\end{thebibliography}
\end{document}